\begin{document}

\title{On a finite state representation of $GL(n,\mathbb{Z})$ }
\author{Andriy Oliynyk and Veronika Prokhorchuk}

\date{}


\theoremstyle{plain}
\newtheorem{theorem}{Theorem}
\newtheorem{lemma}{Lemma}
\newtheorem{proposition}{Proposition}
\newtheorem{corollary}{Corollary}
\newtheorem{definition}{Definition}
\theoremstyle{definition}
\newtheorem{example}{Example}
\newtheorem{remark}{Remark}

\maketitle

\begin{abstract}
It is examined finite state automorphisms of regular rooted trees constructed in~\cite{MR1492064} to represent groups $GL(n,\mathbb{Z})$.  The number of states of automorphisms that correspond to elementary matrices is computed. Using the representation of $GL(2,\mathbb{Z})$ over an alphabet of size $4$  a finite state  representation of the free group of rank $2$ over binary alphabet is constructed.
\end{abstract}

\section{Introduction}
\label{section_Introduction}

Representations of residually finite groups and semigroups by automorphisms and endomorphisms of regular rooted trees is an attractive and challenging research direction. It is inspired mainly by brilliant examples of infinite finitely generated periodic groups constructed as automorphism groups of rooted trees (\cite{MR0545692, MR0565099, MR0696534}). Results on ubiquity of free groups and semigroups in automorphism groups of rooted trees (\cite{MR1320624,MR1629906}) stimulated explicit representations of groups and semigroups containing free subgroups and subsemigroups. Among them, in~\cite{MR1492064} a natural and brilliant representation of groups $GL(n,\mathbb{Z})$, $n>1$, by finite state automorphisms of $2^n$-regular rooted tree was found.

The purpose of this note is further investigation of the construction presented in~\cite{MR1492064}. For elementary matrices in $GL(n,\mathbb{Z})$, $n>1$, it is calculated the number of states of finite state automorphisms corresponding to them. An algorithm to construct a finite state automorphism  by a given unimodular matrix is discussed and implemented. Finally, it is applied a method from~\cite{ADM2023} to construct a quite surprising representation of the free group of rank $2$ by finite state automorphisms of a regular rooted tree based on the representation of $GL(2,\mathbb{Z})$.

The organization of the paper is following. In Section~\ref{section_Rooted_trees} it is briefly recalled required notions and properties regarding finite state automorphisms of regular rooted trees. Details on rooted trees, their automorphisms and in particular finite state automorphisms one can find in~\cite{MR1841755,MR2162164}.
Section~\ref{section_Automaton_representation_GLnZ} contains computations of the number of states of automorphisms corresponding to elementary unimodular matrices.
In Section~\ref{section_Automaton_representation_free_group} a representation of the free group of rank $2$ is constructed.

\section{Rooted trees and their automorphisms}
\label{section_Rooted_trees}

Let $\mathcal{T}_n$, $n>1$, be a rooted $n$-regular tree. Denote by $\mathsf{X}$ the set of vertices of $\mathcal{T}_n$, connected with the root. Then $|\mathsf{X}|=n$. It is convenient to treat the tree $\mathcal{T}_n$ as the (right) Cayley graph of the free monoid $\mathsf{X}^*$ with basis $\mathsf{X}$. From this point of view each vertex of  $\mathcal{T}_n$ is a finite word over $\mathsf{X}$, the root is the empty word $\Lambda$. Two words $u,v$ are connected by an edge if and only if $u=vx$ or $v=ux$ for some $x \in \mathsf{X}$.  

The automorphism group $Aut \mathcal{T}_n$ of  $\mathcal{T}_n$ is a permutational wreath product 
\[
Sym(\mathsf{X}) \wr Aut \mathcal{T}_n
\]
of the symmetric group on $\mathsf{X}$ with  $Aut \mathcal{T}_n$ itself. Each automorphism
$g \in Aut \mathcal{T}_n$ can be uniquely defined by a permutation $\sigma_g \in Sym(\mathsf{X})$ and a multiset $g_x \in Aut \mathcal{T}_n, x\in \mathsf{X}$ that form the so-called wreath recursion
\[
g=(g_x, x \in \mathsf{X})\sigma_g.
\]
The right action of $g$ on vertices of $\mathcal{T}_n$ can be written recursively as follows
\[
(xw)^g=x^{\sigma_g}w^{g_x}, \quad x \in \mathsf{X}, w \in \mathsf{X}^*.
\]
The permutation $\sigma_g$ is called the rooted permutation of $g$. Automorphisms $g_x, x\in \mathsf{X}$ are called states of the first level of $g$. 
Using wreath recursions the product of automorphisms 
\[
g = (g_x, x \in \mathsf{X})\sigma_g, \quad h = (h_x, x \in \mathsf{X})\sigma_h
\]
can be expressed as
\[
gh= (g_x h_{x^{\sigma_g}}, x \in \mathsf{X})\sigma_g\sigma_h.
\]
The identity automorphism will be denoted by $e$.

For arbitrary vertex  $v \in \mathsf{X}^*$ the state of $g$ at $v$ is a uniquely defined automorphism $g_v$ such that
\[
(vw)^g=v^{g}w^{g_v}, \quad w \in \mathsf{X}^*.
\]
The set $Q(g)=\{g_v: v \in \mathsf{X}^*\}$ is called the set of states of $g$. 
Since $g_{\Lambda}=g$ the automorphism $g$ is its state as well. If $Q(g)$ is finite the automorphism $g$ is called finite state automorphisms. 
All finite state automorphisms of $\mathcal{T}_n$ form a countable subgroup $FAut \mathcal{T}_n$ in $Aut \mathcal{T}_n$. We say that a group $G$ has a finite state representation if it is isomorphic to a subgroup of $FAut \mathcal{T}_n$ for some $n$. The self-similar closure of an automorphism $g$ is a subgroup of $Aut \mathcal{T}_n$ generated by the set $Q(g)$. Multiplication rule for automorphisms imply $Q(g^{-1})= \{h^{-1}: h \in Q(g)\}$.

For arbitrary automorphisms $g,h$ and a vertex $v$ the state of their product $gh$ at $v$ is the product $g_vh_{v^g}$. In particular, the set $Q(gh)$ of states of the product $gh$ is a subset of the product $Q(g)Q(h)$ of multipliers' sets of states.

Each automorphism $g \in Aut \mathcal{T}_n$ can be defined by its Moore diagram, i.e. a directed graph with $Q(g)$ as the vertex set. The vertex $g$ is marked. For arbitrary state $h\in Q(g)$ the Moore diagram of $g$ has exactly $n$ labelled arrows starting from $h$. For each $x \in \mathsf{X}$ exactly one arrow starts in $h$ and terminates in $h_x$. It has a label of the form $x | x^{\sigma_h} $. 

\section{Finite state representation of $GL(n,\mathbb{Z})$}
\label{section_Automaton_representation_GLnZ}

Let $n>1$. In~\cite{MR1492064} the authors constructively prove that the  group $GL(n,\mathbb{Z})$ is isomorphic to a subgroup of $FAut \mathcal{T}_{2^n}$. Let us recall this embedding.  We will identify the vertex set of $\mathcal{T}_{2^n}$ with the set of all finite words over the vector space $\mathbb{Z}_2^n$ of dimension $n$ over the binary field $\mathbb{Z}_2$. Define the following permutations $\tau, \sigma, \pi_{i,j}$, $1\le i<j \le n$:
\[
(x_1,x_2,\ldots, x_n)^{\tau}=(x_1+x_2,x_2, \ldots, x_n),
\]
\[
(x_1,x_2,\ldots, x_n)^{\sigma}=(x_1+1,x_2, \ldots, x_n),
\]
\[
(x_1,\ldots, x_i,\ldots, x_j, \ldots x_n)^{\pi_{ij}}=(x_1,\ldots, x_j,\ldots, x_i, \ldots x_n),
\]
where $(x_1,x_2,\ldots, x_n) \in \mathbb{Z}_2^n$. Then all these permutations and the product $\tau \sigma$ are involutions. Consider automorphisms $t_1,t_2$, $s_{i,j}$,$1\le i<j \le n$,  of $Aut \mathcal{T}_{2^n}$, defined by the following wreath recursions:
\[
t_1=(t_1{}_{(x_1,x_2,\ldots, x_n)}, (x_1,x_2,\ldots, x_n) \in \mathbb{Z}_2^n) \tau,
\]
\[
t_2=(t_2{}_{(x_1,x_2,\ldots, x_n)}, (x_1,x_2,\ldots, x_n) \in \mathbb{Z}_2^n) \tau \sigma 
\]
\[
s_{ij}=(s_{ij}{}_{(x_1,x_2,\ldots, x_n)}, (x_1,x_2,\ldots, x_n) \in \mathbb{Z}_2^n)\pi_{ij},
\]
where
\[
t_1{}_{(x_1,x_2,\ldots, x_n)}=
\begin{cases}
    t_2, & \text{ if } x_1=x_2=1 \\
    t_1, & \text{ otherwise }
\end{cases}, 
\]
\[
t_2{}_{(x_1,x_2,\ldots, x_n)}=
\begin{cases}
    t_1, & \text{ if } x_1=1, x_2=0 \\
    t_2, & \text{ otherwise }
\end{cases}, 
\]
$s_{ij}{}_{(x_1,x_2,\ldots, x_n)}=s_{ij}$. Then $Q(t_1)=Q(t_2)=\{t_1,t_2\}$, $Q(s_{ij})=\{s_{ij}\}$. 

It is shown in~\cite{MR1492064}, that the subgroup of $FAut \mathcal{T}_{2^n}$ generated by the set $\{t_1, s_{ij}, 1\le i<j \le n \}$ is isomorphic to $GL(n,\mathbb{Z})$. More precisely, the required isomorphism is defined as follows. Denote by $T_{ij}(k)$ the elementary $n \times n$ matrix obtained from the identity matrix by adding the $i$th column multiplied by $k$  to the $j$th column, $1\le i,j \le n$, $i\ne j$, $k \in \mathbb{Z}$, $k\ne 0$, and by $E_{i}$ the elementary matrix obtained from the identity by multiplying its $i$th column by $-1$, $1\le i \le n$. Denote by $E_{ij}$ the permutation matrix  that correspond to the transposition $(ij)$, $1\le i<j \le n$. 
Then the mapping $\varphi_n$ that sends elementary matrix $T_{21}(1)$ to $t_1$ and  permutation matrix $E_{ij}$ to $s_{ij}$, $1\le i<j \le n$, defines the required isomorphic embedding.

This construction gives rise to the following natural algorithm of constructing a finite state automorphism $\varphi_n(A)$ corresponding to a given matrix $A \in GL(n,\mathbb{Z})$:
\begin{enumerate}
    \item 
    factorize $A$ as a product of elementary matrices $F_1 \ldots F_m$;    
    \item
    compute finite state automorphisms $\varphi(F_i)$, $1\le i \le m$;
    \item 
    compute $\varphi(A)$ as the product $\varphi(F_1) \ldots (F_m)$.
\end{enumerate}

We implemented this algorithm using \texttt{GAP} (\cite{GAP4}) and \texttt{AutomGrp}\cite{AutomGrp1.3.2} package.

In order to estimate the number of states of $\varphi_n(A)$ we examine the subgroup generated by automorphisms $t_1,t_2$.

\begin{lemma}
    \label{t1_and_t2_commute}
    Automorphisms $t_1$ and $t_2$ commute.
\end{lemma}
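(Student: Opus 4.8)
The plan is to prove the stronger assertion that $t_1t_2$ and $t_2t_1$ induce the same permutation on every vertex of $\mathcal{T}_{2^n}$, arguing by induction on the length of the vertex. The base case (the empty word) is trivial, and the first step is to check that the two products share a rooted permutation: by the multiplication rule $\sigma_{t_1t_2}=\sigma_{t_1}\sigma_{t_2}=\tau\cdot\tau\sigma$ while $\sigma_{t_2t_1}=\sigma_{t_2}\sigma_{t_1}=\tau\sigma\cdot\tau$, and since $2x_2\equiv 0\pmod 2$ both of these equal $\sigma$. Thus $t_1t_2$ and $t_2t_1$ act the same way on the first letter of any word, and we are reduced to comparing their first-level states.

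Using $(gh)_x=g_xh_{x^{\sigma_g}}$ one has $(t_1t_2)_x=(t_1)_x(t_2)_{x^{\tau}}$ and $(t_2t_1)_x=(t_2)_x(t_1)_{x^{\tau\sigma}}$. All the data defining $t_1$ and $t_2$ — their rooted permutations and their first-level states — depend only on the first two coordinates $x_1,x_2$ of $x\in\mathbb{Z}_2^n$, so it suffices to run through the four cases $(x_1,x_2)\in\{0,1\}^2$. The outcome is clean: for $x_1=1$ one finds $(t_1t_2)_x=(t_2t_1)_x$ (the common value being $t_1^2$ when $(x_1,x_2)=(1,0)$ and $t_2^2$ when $(x_1,x_2)=(1,1)$), while for $x_1=0$ one finds $(t_1t_2)_x=t_1t_2$ and $(t_2t_1)_x=t_2t_1$. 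I would record this short four-row table explicitly, as it is the computational core of the argument.

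The induction then closes the loop: for $w=xu$ with $x\in\mathbb{Z}_2^n$ we have $(xu)^{t_1t_2}=x^{\sigma}u^{(t_1t_2)_x}$ and $(xu)^{t_2t_1}=x^{\sigma}u^{(t_2t_1)_x}$, so it is enough to see $u^{(t_1t_2)_x}=u^{(t_2t_1)_x}$. When $x_1=1$ this is immediate because the two states are literally the same automorphism; when $x_1=0$ the two states are $t_1t_2$ and $t_2t_1$, and $u$ is strictly shorter than $w$, so the induction hypothesis gives $u^{t_1t_2}=u^{t_2t_1}$. The main point to get right — and the reason the calculation is not vacuous — is exactly this self-referential feature: in the two cases $x_1=0$ the first-level state of each product is again that same product, so it is the induction on the level, rather than a finite check, that actually delivers the equality $t_1t_2=t_2t_1$.
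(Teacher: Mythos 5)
Your proposal is correct and follows essentially the same route as the paper: both compute that $t_1t_2$ and $t_2t_1$ share the rooted permutation $\sigma$, tabulate the first-level states (agreeing literally when $x_1=1$ and reproducing the two products themselves when $x_1=0$), and close with induction on the level of the tree. Your write-up merely makes explicit the induction on word length that the paper leaves implicit.
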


\begin{proof}
Denote by $g$ and $h$ the products $t_1t_2$ and $t_2t_1$  correspondingly. Then their rooted permutations are $\tau \tau \sigma$ and $ \tau \sigma \tau$. Each of them equals  $\sigma$.
For arbitrary $(x_1,x_2,\ldots, x_n) \in \mathbb{Z}_2^n$ states of the first level of $g$ and $h$ at $(x_1,x_2,\ldots, x_n)$  have the form
\[
g_{(x_1,x_2,\ldots, x_n)}=
\begin{cases}
    t_1^2,& \text{ if } x_1=1, x_2=0 \\
    t_2^2,& \text{ if } x_1=1, x_2=1 \\
    t_1t_2, & \text{ otherwise }
\end{cases},
\]
\[
h_{(x_1,x_2,\ldots, x_n)}=
\begin{cases}
    t_1^2,& \text{ if } x_1=1, x_2=0 \\
    t_2^2,& \text{ if } x_1=1, x_2=1 \\
    t_2t_1, & \text{ otherwise }
\end{cases}.
\]
Since rooted permutations of $g$ and $h$ are equal we obtain by induction the equality $g=h$. The proof is complete. 
\end{proof}

\begin{lemma}
    \label{states_of_powers_of_t1_and_t2}
    Let $n\ge 1$ and $Q=\{t_1^n,t_1^{n-1}t_2, \ldots, t_1t_2^{n-1}, t_2^n \}$. Then automorphisms from $Q$ are pairwise different and for arbitrary $g\in Q$ the set of states of $g$ is $Q$.  In particular, automorphisms $t_1^n$ and $t_2^n$ have exactly $n+1$ states.
\end{lemma}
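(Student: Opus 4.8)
I would write $m_i=t_1^it_2^{n-i}$ for $0\le i\le n$. Since $t_1$ and $t_2$ commute by Lemma~\ref{t1_and_t2_commute}, every length-$n$ product of copies of $t_1$ and $t_2$ equals some $m_i$, so $Q=\{m_0,m_1,\ldots,m_n\}$, and the assertion breaks into three parts: (i)~$Q(g)\subseteq Q$ for every $g\in Q$; (ii)~the $m_i$ are pairwise distinct (so $|Q|=n+1$); (iii)~every $m_i$ occurs as a state of every $m_j$. Part~(i) is immediate: writing $g=c_1\cdots c_n$ with each $c_k\in\{t_1,t_2\}$, every first-level state $g_x$ is a product of $n$ elements of $Q(t_1)=Q(t_2)=\{t_1,t_2\}$, hence again lies in $Q$; iterating on the length of a vertex gives $Q(g)\subseteq Q$.

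The engine for~(ii) and~(iii) is an explicit description of the first-level states of $m_i$. First I would record that $\tau$ and $\sigma$ fix every coordinate but the first, commute, and are involutions, so the rooted permutation of $m_i$ is $\tau^n\sigma^{n-i}$. Then, reading off the letters $c_1,\ldots,c_n$ of $m_i$ and tracking only the first coordinate of the running vertex (the sections of $t_1,t_2$ depend only on the first two coordinates of a letter), a routine bookkeeping computation should give that $(m_i)_x$ depends only on the pair $(x_1,x_2)$ and equals $m_{\lfloor i/2\rfloor}$, $m_{\lceil i/2\rceil}$, $m_{\lfloor(i+n)/2\rfloor}$, $m_{\lceil(i+n)/2\rceil}$ according as $(x_1,x_2)=(1,1),(0,1),(0,0),(1,0)$. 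Writing $f_1(i)=\lfloor i/2\rfloor$, $f_2(i)=\lceil i/2\rceil$, $f_3(i)=\lfloor(i+n)/2\rfloor$, $f_4(i)=\lceil(i+n)/2\rceil$, and using $g_{xw}=(g_x)_w$, it then follows that $Q(m_i)$ is precisely the set of $m_j$ with $j$ in the closure of $\{i\}\subseteq\{0,\ldots,n\}$ under the four maps $f_1,f_2,f_3,f_4$.

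For~(ii) I would introduce the equivalence relation $a\sim b$ defined by $m_a=m_b$ and argue by infinite descent. Comparing rooted permutations, $a\sim b$ forces $\sigma^{a-b}=e$, hence $a\equiv b\pmod 2$; in particular no two consecutive integers are equivalent. Comparing the first-level states at $(x_1,x_2)=(1,1)$ and at $(0,1)$, $a\sim b$ forces $\lfloor a/2\rfloor\sim\lfloor b/2\rfloor$ and $\lceil a/2\rceil\sim\lceil b/2\rceil$. If some $a\sim b$ with $a\ne b$ existed, I would pick one with, say, $a<b$ and $b$ minimal; then $b-a\ge 2$, so $b\ge 2$, while $(\lfloor b/2\rfloor-\lfloor a/2\rfloor)+(\lceil b/2\rceil-\lceil a/2\rceil)=b-a\ge 2$ forces one of the pairs $(\lfloor a/2\rfloor,\lfloor b/2\rfloor)$, $(\lceil a/2\rceil,\lceil b/2\rceil)$ to be equivalent with distinct entries and larger entry at most $\lceil b/2\rceil<b$, contradicting minimality of $b$. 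Hence $\sim$ is trivial and $|Q|=n+1$.

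Part~(iii) reduces, by the description of $Q(m_i)$ above, to a purely combinatorial claim: the closure of any singleton $\{i\}\subseteq\{0,\ldots,n\}$ under $f_1,f_2,f_3,f_4$ is all of $\{0,\ldots,n\}$. Since iterating $f_1$ drives any $i$ to $0$, it suffices to show that $0$ reaches every element, and I would prove this by induction on $n$, exploiting a self-similarity of the four maps. For even $n=2n'$ they are $i\mapsto\lfloor i/2\rfloor,\lceil i/2\rceil,\lfloor i/2\rfloor+n',\lceil i/2\rceil+n'$, so, letting $R$ be the closure of $\{0\}$, the set $S=\{\,j\le n' : j\in R \text{ and } j+n'\in R\,\}$ contains $0$ (because $n'=f_3(0)\in R$) and is closed under the four maps belonging to the value $n'$; by induction $S=\{0,\ldots,n'\}$, and this forces $R=\{0,\ldots,2n'\}$. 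The odd case $n=2n'+1$ is of the same character, but the floor/ceil split of the two "shifted" maps now depends on the parity of $i$, so the bookkeeping is more intricate; I expect this induction — and in particular verifying that the rounding lands on the intended integers when $n$ is odd — to be the only genuinely technical point of the whole proof. Once (i)--(iii) are in place we get $Q(g)=Q$ for every $g\in Q$ and $|Q|=n+1$, and in particular $t_1^n$ and $t_2^n$ have exactly $n+1$ states.
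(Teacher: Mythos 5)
Your reduction and the first two parts are sound: the first-level state formula you posit is exactly the paper's equation~(\ref{states_of_powers}) rewritten with floors and ceilings (like the paper, you leave its inductive verification to the reader), the inclusion $Q(g)\subseteq Q$ follows as you say, and your descent argument for pairwise distinctness is correct --- it replaces the paper's shorter reduction, where commutativity and cancellation turn $m_a=m_b$ with $a>b$ into $t_1^{a-b}=t_2^{a-b}$, which is then excluded by induction. The genuine gap is in part~(iii). You prove the key combinatorial claim --- that the closure of $\{0\}$ under $f_1,f_2,f_3,f_4$ is all of $\{0,\dots,n\}$ --- only for even $n$, and for odd $n$ you merely assert that an induction ``of the same character'' should work. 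It does not, at least not in the form suggested: for $n=2n'+1$ the natural analogue of your even-case invariant, $S=\{\,j\le n' : j\in R \text{ and } j+n'+1\in R\,\}$ with $R$ the closure of $\{0\}$, does contain $0$ and is closed under $j\mapsto\lfloor j/2\rfloor$ (because $f_4(j)=\lfloor j/2\rfloor+n'+1$ for both parities of $j$), but closure under $j\mapsto\lceil j/2\rceil$ fails for odd $j$: one would need $\lceil j/2\rceil+n'+1\in R$, and (for $n\ge 7$, say) this number is not among the one-step images of $j$ or of $j+n'+1$, so it does not follow from the membership hypotheses. Hence the odd case needs a genuinely different idea, not just more careful rounding, and as written your proof of part~(iii) is incomplete. (For comparison, the paper argues strong connectivity of the Moore diagram by a minimal-counterexample argument over $n$; your even-case induction is a different route, but it covers only half the claim.)

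The gap is fixable inside your framework, and in a way that avoids the parity split altogether. Using only the two floor maps $i\mapsto\lfloor(i+bn)/2\rfloor$, $b\in\{0,1\}$, the identity $\lfloor(\lfloor x/2\rfloor+c)/2\rfloor=\lfloor(x+2c)/4\rfloor$ shows that the elements reachable from $0$ in exactly $m$ steps are precisely the numbers $\lfloor nB/2^m\rfloor$ with $0\le B\le 2^m-1$; once $2^m>n$ the consecutive values $nB/2^m$ differ by less than $1$ and stay below $n$, so these floors cover all of $\{0,\dots,n-1\}$, and then $n=\lceil((n-1)+n)/2\rceil=f_4(n-1)$ is reached in one more step. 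Substituting this for your even/odd induction would complete part~(iii) and hence the lemma.
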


\begin{proof}
Let $g= t_1^{2k_1+\varepsilon_1}t_2^{2k_2+\varepsilon_2}$ for non-negative integers $k_1,k_2$ and $\varepsilon_1,\varepsilon_1 \in \{0,1\}$. Using induction on $n$ and Lemma~\ref{t1_and_t2_commute} one can directly verify that
for arbitrary $(x_1,x_2,\ldots, x_n) \in \mathbb{Z}_2^n$ the states of the first level of $g$ at $(x_1,x_2,\ldots, x_n)$ has the form
\begin{equation}
    \label{states_of_powers}
g_{(x_1,x_2,\ldots, x_n)}=
\begin{cases}
    t_1^{2k_1+\varepsilon_1+k_2}t_2^{k_2+\varepsilon_2},& \text{ if } x_1=x_2=0 \\
    t_1^{2k_1+\varepsilon_1+k_2+\varepsilon_2}t_2^{k_2},& \text{ if } x_1=1, x_2=0 \\
    t_1^{k_1+\varepsilon_1}t_2^{k_1+2k_2+\varepsilon_2}, & \text{ if } x_1=0, x_2=1 \\
    t_1^{k_1}t_2^{k_1+\varepsilon_1+2k_2+\varepsilon_2},& \text{ if } x_1=x_2=1 \\
\end{cases}.
\end{equation}
Since $t_1 \ne t_2$ by induction on $k$ using~(\ref{states_of_powers}) one obtains inequality $t_1^k \ne t_2^k$, $k \ge 1$. Hence, automorphisms from $Q$ are pairwise different and $|Q|=n+1$. 

Since states of the first level of $g$ belong to $Q$ the inclusion
$Q(g)\subseteq Q$ holds. To prove the equality it is sufficient to show that the Moore diagram of $g$ is a strongly connected graph. 
Direct computations show that this statement holds for $n\le 6$. Then we assume that $n>6$. Let $n$ 

Using states at $(1,0,\ldots, 0)$ one constructs a directed path from $g$ to $t_1^{n-1}t_2$ and $t_1^n$.  Now it is sufficient to show that for arbitrary  $l$, $0\le l \le n$, the state $t_1^lt_2^{n-l}$ is accessible from $t_1^n$. It follows from~(\ref{states_of_powers})  that accessibility of $t_1^lt_2^{n-l}$ implies accessibility of $t_1^{n-l}t_2^{l}$. Let us call this property symmetricity.

Assume from the contrary that there $n$ is the least positive integer such that 
there exists not accessible states from $Q$. Let $k$ be an integer such that state $t_1^{k}t_1^{n-k}$ is not accessible. Then $0<k<n$. Since the statement about accessibility holds for $n-1$ the state $t_1^{k}t_1^{n-k-1}$ is accessible from $t_1^{n-1}$. As soon as each state of the product is a product of states of the multipliers it means that in the product $t_1^{n-1} \cdot t_1$ the state $t_1^{k}t_1^{n-k-1}$ is multiplied by $t_1$ only. Hence, the $t_1^{k+1}t_1^{n-k-1}$ is accessible. Let us call this property inconsistency.

Assume now that $l$ is the least number such that $t_1^{l}t_1^{n-l}$ is not accessible. Then symmetricity implies $l<n/2$. Since $t_1t_2^{n-1}$ is accessible at least one of $t_1^2t_2^{n-2}$ and $t_1^3t_2^{n-3}$ is accessible. In both cases $t_1^2t_2^{n-2}$ is accessible.  Hence $2\le l$. 
Applying~(\ref{states_of_powers}) for cases $x_2=0$ one obtains that states
\[
t_1^{2l+1}t_2^{n-2l-1}, \quad t_1^{2l+2}t_2^{n-2l-2} 
\]
are not accessible. It contradicts with inconsistency. 
The proof is complete.
\end{proof}

\begin{proposition}
    \label{self-similar_closure}
    The self-similar closure of each of automorphisms $t_1$ and $t_2$ is isomorphic to $\mathbb{Z} \times \mathbb{Z}$.    
\end{proposition}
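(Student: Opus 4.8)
The self-similar closure of $t_1$ is, by definition, the group generated by $Q(t_1) = \{t_1, t_2\}$. By Lemma~\ref{t1_and_t2_commute} this group is abelian, and it is a quotient of $\mathbb{Z} \times \mathbb{Z}$ via $(a,b) \mapsto t_1^a t_2^b$. So the whole task reduces to showing this map is injective, i.e. that $t_1$ and $t_2$ generate a free abelian group of rank $2$: no nontrivial relation $t_1^a t_2^b = e$ holds. The same argument handles $t_2$, since $Q(t_2) = \{t_1, t_2\}$ as well, giving the identical group. The hard part is purely this: ruling out relations.

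Let me think about how I would rule out a relation $t_1^a t_2^b = e$ with $(a,b) \neq (0,0)$. The cleanest route uses Lemma~\ref{states_of_powers_of_t1_and_t2}, which already tells us that for $n \ge 1$ the elements $t_1^n, t_1^{n-1}t_2, \dots, t_2^n$ are pairwise distinct, and more generally that $t_1^k \neq t_2^k$ for $k \ge 1$. First I would dispose of the case $b = 0$: then $t_1^a = e$ forces $a = 0$ since... wait, I actually need to know $t_1$ has infinite order, which is not literally one of the stated inequalities but follows from the structure — $t_1$ has rooted permutation $\tau \neq \mathrm{id}$, so $t_1 \neq e$, and I would argue $t_1$ has infinite order by observing (via formula~\eqref{states_of_powers} with $k_2 = \varepsilon_2 = 0$) that $t_1^{2k}$ has $t_1^{3k}$ among its first-level states, so a relation $t_1^N = e$ would propagate to $t_1^{(3/2)N} = e$ and so on, impossible for $N$ finite — or more simply, project to the first level: $t_1^N$ acts on the top by $\tau^N$, and when $N$ is odd this is $\tau \neq \mathrm{id}$, so $N$ must be even, and then a descent argument on the $2$-adic valuation of $N$ using~\eqref{states_of_powers} finishes it. For the general relation with $b \neq 0$: I would look at the abelianized action on the first level. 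The rooted permutation of $t_1^a t_2^b$ is $\tau^a (\tau\sigma)^b = \tau^{a+b}\sigma^b$ (using that $\tau, \sigma$ commute as permutations of $\mathbb{Z}_2^n$ — both only touch the first coordinate), and this is trivial iff $a + b \equiv 0$ and $b \equiv 0 \pmod 2$, i.e. iff both $a, b$ are even. So a minimal counterexample has $a = 2a'$, $b = 2b'$, and then by~\eqref{states_of_powers} (setting $k_1 = a', \varepsilon_1 = 0, k_2 = b', \varepsilon_2 = 0$, reading the $x_1 = x_2 = 0$ entry) the state $t_1^{a'+b'} t_2^{b'}$ must also equal $e$; comparing total exponent sums $a + b = 2(a'+b')$ versus $(a'+b') + b'$, and iterating, forces $(a,b) = (0,0)$ by a strictly-decreasing-norm argument (e.g. on $|a| + |b|$, or on the pair under a suitable monovariant). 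This is the real content and the main obstacle: setting up the monovariant so that the recursion~\eqref{states_of_powers} strictly decreases it unless the exponents vanish.

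**Writing it up.** I would structure the proof as: (1) invoke Lemma~\ref{t1_and_t2_commute} to get that $\langle t_1, t_2 \rangle$ is abelian, hence a quotient of $\mathbb{Z}^2$; (2) state that it remains to check $\langle t_1, t_2\rangle$ is torsion-free of rank $2$, equivalently that $t_1^a t_2^b = e \Rightarrow a = b = 0$; (3) compute the rooted permutation of $t_1^a t_2^b$ and conclude $2 \mid a$ and $2 \mid b$; (4) apply~\eqref{states_of_powers} to the $x_1 = x_2 = 0$ first-level state to obtain the relation $t_1^{a/2 + b/2} t_2^{b/2} = e$ and run the descent on $|a| + |b|$; (5) conclude the self-similar closure is $\mathbb{Z} \times \mathbb{Z}$, and note the argument for $t_2$ is verbatim the same since $Q(t_2) = \{t_1, t_2\}$. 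An alternative to steps (3)--(4), if one prefers to avoid re-deriving that $t_1$ and $t_2$ have infinite order, is to cite that $GL(n,\mathbb{Z})$ contains $T_{21}(1)$ and a second elementary transvection generating a $\mathbb{Z}^2$ and to trace through $\varphi_n$ — but that risks circularity depending on how Lemma~\ref{states_of_powers_of_t1_and_t2} and the embedding were actually used upstream, so I would favour the self-contained descent.
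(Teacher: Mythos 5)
Your reduction is right (the self-similar closure is $\langle t_1,t_2\rangle$, abelian by Lemma~\ref{t1_and_t2_commute}, so everything hinges on excluding relations $t_1^a t_2^b=e$), and the parity step is correct: the rooted permutation $\tau^{a+b}\sigma^b$ is trivial only if $a$ and $b$ are both even. But the descent you build on top of this has a genuine gap. First, you misquote~(\ref{states_of_powers}): for $g=t_1^{2a'}t_2^{2b'}$ the state at $(0,\ldots,0)$ is $t_1^{2a'+b'}t_2^{b'}$, not $t_1^{a'+b'}t_2^{b'}$. With the correct exponents the total exponent sum is preserved, since $(2a'+b')+b'=a+b$ --- which is exactly what Lemma~\ref{states_of_powers_of_t1_and_t2} predicts (states of degree-$n$ positive words are degree-$n$ positive words) --- so neither the exponent sum nor $|a|+|b|$ strictly decreases: e.g.\ $(a,b)=(2,2)\mapsto(3,1)$ keeps $|a|+|b|=4$. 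You yourself flag ``setting up the monovariant'' as the main obstacle, and you never set one up, so the core of the argument is missing as written. (Also, the side claim that $t_1^{3k}$ occurs among the first-level states of $t_1^{2k}$ is false: by~(\ref{states_of_powers}) those states are $t_1^{2k}$ and $t_1^k t_2^k$.)

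The argument can be repaired in two ways. (i) Descend on the $t_2$-exponent alone: the iteration sends a pair $(a,b)$ with both entries even to $(a+b/2,\,b/2)$, so a persistent relation forces $b$ to be divisible by every power of $2$, hence $b=0$; then $t_1^a=e$ with $a\ne 0$ is excluded by Lemma~\ref{states_of_powers_of_t1_and_t2}, since $t_1^{|a|}$ has $|a|+1\ge 2$ states while $e$ has one. (ii) The paper's route, which needs no new computation: any relation rewrites as $t_1^p t_2^q=t_1^r t_2^s$ with $p,q,r,s\ge 0$; by Lemma~\ref{states_of_powers_of_t1_and_t2} such a word of degree $n\ge 1$ has exactly $n+1$ states, so $p+q=r+s$, and the pairwise distinctness asserted in that lemma then forces $(p,q)=(r,s)$. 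Combined with Lemma~\ref{t1_and_t2_commute}, this gives $\mathbb{Z}\times\mathbb{Z}$ at once, which is why the paper's proof is a single sentence; your plan instead re-derives a descent from~(\ref{states_of_powers}) and, in its present form, does not close it.
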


\begin{proof}
    The statement immediately follows from Lemma~\ref{t1_and_t2_commute} and Lemma~\ref{states_of_powers_of_t1_and_t2}.
\end{proof}

\begin{theorem}
    \label{estimation_of_automaton_size}
    Let $A \in GL(n,\mathbb{Z})$. Denote by $m(A)$ the number of states of the automorphism $\varphi_n(A) \in FAut \mathcal{T}_{2^n}$.
    \begin{enumerate}
        \item
        \label{thm_item1}
        If $A$ is a permutation matrix then $m(A)=1$.
        \item 
        \label{thm_item2}
        If $A=E_i$, $1\le i \le n$, then $m(A)=8$.
        \item
        \label{thm_item3}
        If $A=T_{ij}(k)$, $1\le i,j \le n$, $i\ne j$, $k \in \mathbb{Z}$, $k\ne 0$, then $m(A)=|k|+1$.                
    \end{enumerate}
\end{theorem}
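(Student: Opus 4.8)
\emph{Plan.} For all three items the idea is to reduce everything to the automorphisms $t_1,t_2$ and $s_{ij}$, whose sets of states are already known from Section~\ref{section_Automaton_representation_GLnZ} and Lemma~\ref{states_of_powers_of_t1_and_t2}, using repeatedly that $\varphi_n$ sends every permutation matrix to a \emph{constant} automorphism, i.e.\ one all of whose first-level states coincide with itself. Two elementary facts will be used throughout. First, a product of constant automorphisms is constant, since $g=(g,\dots,g)\alpha$ and $h=(h,\dots,h)\beta$ give $gh=(gh,\dots,gh)\alpha\beta$. Second, conjugation by a constant automorphism preserves the number of states: if $s$ is constant then the state of $sgs^{-1}$ at a vertex $v$ equals $s\,g_{v^{s}}\,s^{-1}$, so $h\mapsto shs^{-1}$ is a bijection $Q(g)\to Q(sgs^{-1})$. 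Item~\ref{thm_item1} follows at once: the permutation matrices form a subgroup generated by the $E_{ij}$, each $\varphi_n(E_{ij})=s_{ij}$ is constant because $Q(s_{ij})=\{s_{ij}\}$, hence $\varphi_n(A)$ is constant and $m(A)=1$. For item~\ref{thm_item3}, pick a permutation matrix $P$ with $P^{-1}T_{ij}(k)P=T_{21}(k)=T_{21}(1)^{k}$; then $\varphi_n(T_{ij}(k))=\varphi_n(P)\,t_1^{k}\,\varphi_n(P)^{-1}$ with $\varphi_n(P)$ constant, so $m(T_{ij}(k))=|Q(t_1^{k})|$, which equals $k+1$ for $k\ge 1$ by Lemma~\ref{states_of_powers_of_t1_and_t2} and equals $|Q(t_1^{-k})|=|k|+1$ for $k\le -1$ because $Q(t_1^{k})=\{h^{-1}:h\in Q(t_1^{-k})\}$.

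\emph{Item~\ref{thm_item2}} is the substantial part. By the second fact it suffices to compute $m(E_1)$, and since only the first two coordinates will be touched, the computation is uniform in $n$. Starting from the $2\times 2$ identity
\[
E_1=T_{21}(1)\,T_{12}(-1)\,T_{21}(1)\,E_{12},
\]
whose first three factors multiply to the order-four rotation $\left(\begin{smallmatrix}0&-1\\1&0\end{smallmatrix}\right)$, together with $T_{12}(1)=E_{12}T_{21}(1)E_{12}$, applying the isomorphism $\varphi_n$ yields the explicit word
\[
\varphi_n(E_1)=t_1\,s_{12}\,t_1^{-1}\,s_{12}\,t_1\,s_{12}.
\]
Now I would unfold the wreath recursion. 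Since every first-level state of $t_1$, of $t_2$, of $t_1^{-1}$ and of $t_2^{-1}$ is a single generator ($t_1,t_2,t_1^{-1},t_2^{-1}$ respectively), while $s_{12}$ is constant, every state of $\varphi_n(E_1)$ has the shape $t_a\,s_{12}\,t_b^{-1}\,s_{12}\,t_c\,s_{12}$ with $a,b,c\in\{1,2\}$; in particular $m(E_1)\le 8$. Using the explicit wreath recursions of $t_1,t_2,s_{12}$ — their first-level states depend only on the pair $(x_1,x_2)$, with $s_{12}$ swapping $x_1$ and $x_2$ — one computes the four first-level states of $\varphi_n(E_1)$ and then iterates; the process closes on the set of all eight automorphisms $t_a s_{12}t_b^{-1}s_{12}t_c s_{12}$, and the Moore diagram on these eight vertices turns out to be strongly connected, so that $Q(\varphi_n(E_1))$ is exactly this set. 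It then only remains to see that the eight automorphisms are pairwise distinct, which gives $m(E_1)=8$.

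\emph{The main obstacle} is precisely this distinctness. I would establish it by combining two ingredients. First, the rooted permutations: using the identity $\tau\pi_{12}\tau\pi_{12}\tau\pi_{12}=\mathrm{id}$ (which reflects $E_1\equiv I\pmod 2$) one evaluates the rooted permutation of each $t_a s_{12}t_b^{-1}s_{12}t_c s_{12}$ as a word in $\tau,\sigma,\pi_{12}$, and this already separates most of the pairs. Second, for the pairs whose rooted permutations happen to agree one uses that $\langle t_1,t_2\rangle$ is free abelian of rank $2$: Lemmas~\ref{t1_and_t2_commute} and~\ref{states_of_powers_of_t1_and_t2} give $t_1t_2=t_2t_1$ and $t_1^{a}t_2^{b}=e\Rightarrow a=b=0$, which rules out the remaining coincidences. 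Everything else is a finite sequence of mechanical wreath-recursion checks, consistent with the computation carried out in \texttt{GAP}.
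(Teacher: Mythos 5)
Your argument is essentially the paper's: item~(\ref{thm_item1}) via constancy of $\varphi_n$ on permutation matrices, item~(\ref{thm_item3}) via conjugation by one-state automorphisms combined with Lemma~\ref{states_of_powers_of_t1_and_t2} and $Q(g^{-1})=\{h^{-1}:h\in Q(g)\}$, and item~(\ref{thm_item2}) via the same factorization $E_1=T_{21}(1)\,E_{12}\,T_{21}(-1)\,E_{12}\,T_{21}(1)\,E_{12}$ giving $m(E_1)\le 8$, with equality (and $E_i=E_{1i}E_1E_{1i}$) settled by a finite direct check, exactly as the paper does when it states that direct verification yields equality. One caveat: your proposed shortcut for the remaining distinctness cases, reducing an equality of two words $t_a s_{12} t_b^{-1} s_{12} t_c s_{12}$ to a relation $t_1^{a}t_2^{b}=e$ in the free abelian group $\langle t_1,t_2\rangle$, is not obviously valid because these words involve $s_{12}$ and do not visibly collapse into that subgroup; this does not change the verdict, since both you and the paper ultimately rest $m(E_1)=8$ on the same mechanical wreath-recursion verification.
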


\begin{proof}
\ref{thm_item1}. 
Definition of $\varphi_n$ implies $m(E_{ij})=1$ for a permutation matrix $E_{ij}$, $1\le i<j \le n$. Since the product of automorphisms with exactly 1 state has 1 state the required equality holds for arbitrary permutation matrix.

\ref{thm_item2}.
Since automorphism $t_1$ has 2 states its inverse $t_1^{-1}=\varphi_n(T_{21}(-1))$ has 2 states as well. Then the equality
\[
E_1=T_{21}(1)\cdot E_{12} \cdot T_{21}(-1) \cdot E_{12} \cdot T_{21}(1) \cdot E_{12}.
\]
implies $m(E_1)\le 8$. Direct verification shows that equality holds.
Since $E_i=E_{1i}\cdot E_1\cdot E_{1i}$ one obtains  $m(E_i)= 8$, $2\le i \le n$.

\ref{thm_item3}. 
Since $\varphi_n(T_{21}(1))=t_1$ Lemma~\ref{states_of_powers_of_t1_and_t2} implies 
$m(T_{21}(k))=k+1$, $k>0$. The inverse automorphism  $t_1^{-k}$ equals $\varphi_n(T_{21}(-k))$ and has $k+1$ states as well. Hence $m(T_{21}(-k))=k+1$, $k>0$.

Then from equalies 
\[
T_{12}(k)=E_{12}\cdot T_{21}(k)\cdot E_{12}, 
\] 
\[
T_{i1}(k)=E_{2i}\cdot T_{21}(k)\cdot E_{2i}, \quad 3\le i \le n,
\]
\[
T_{2j}(k)=E_{1j}\cdot T_{21}(k)\cdot E_{1j}, \quad 3\le j \le n,
\]
\[
T_{ij}(1)=E_{2i}\cdot E_{1j}\cdot T_{21}(1) \cdot E_{1j}\cdot E_{2i}, \quad
3\le i,j \le n, i\ne j,
\]
it follows 
$m(T_{ij}(k))=|k|+1$, $1\le i,j \le n$, $i\ne j$, $k\in \mathbb{Z}$, $k\ne 0$.
\end{proof}

This theorem together with a factorization of a matrix $A \in GL(n,\mathbb{Z})$ in a product of elementary matrices give an upper estimation on the size of $Q(\varphi_n(A))$. In particular, from the third statement of the theorem we immediately have

\begin{corollary}
    Let $A \in SL(n,\mathbb{Z})$ be a triangular matrix, $A=(a_{ij})_{i,j=1}^{n}$.
    Then 
    \[
    |Q(\varphi_n(A))|\le \prod_{i\ne j}(1+|a_{ij}|).
    \]
\end{corollary}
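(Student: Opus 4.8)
The plan is to exhibit $A$ as a product of elementary transvections $T_{ij}(a_{ij})$ — exactly one factor per off-diagonal entry — and then to bound the number of states of the associated product of automorphisms by the submultiplicativity of $g \mapsto |Q(g)|$ recorded in Section~\ref{section_Rooted_trees}.

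First I would reduce to the unitriangular case: a triangular matrix of determinant $1$ has diagonal entries that are units of $\mathbb{Z}$ whose product is $1$, and the stated bound is the one available precisely when they all equal $1$; say $A$ is upper unitriangular, the lower-triangular case being entirely symmetric. Writing $E_{ij}$ for the matrix unit with a single $1$ in position $(i,j)$, we have $A = I + \sum_{i<j} a_{ij}E_{ij}$ and $T_{ij}(k) = I + kE_{ij}$, with $E_{ij}E_{i'j'} = \delta_{ji'}E_{ij'}$. Grouping the factors $T_{ij}(a_{ij})$ into blocks sharing a common first index $i$ (within a block $E_{ij}E_{ij'}=0$, so the block equals $I+\sum_{j>i}a_{ij}E_{ij}$ for any internal ordering) and multiplying the blocks in order of decreasing $i$ kills every cross term: a surviving product of two distinct blocks would force $i'=j>i$ for a later block, contradicting the chosen order. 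Hence this ordered product equals $A$, and it uses exactly one factor $T_{ij}(a_{ij})$ for each pair $i<j$ (the factors with $a_{ij}=0$ are the identity and may be kept or discarded).

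Next, since $\varphi_n$ is a homomorphism, $\varphi_n(A) = \prod_{i<j}\varphi_n(T_{ij}(a_{ij}))$. The inclusion $Q(gh)\subseteq Q(g)Q(h)$ from Section~\ref{section_Rooted_trees} gives $|Q(gh)| \le |Q(g)|\,|Q(h)|$, and iterating over the factors yields $|Q(\varphi_n(A))| \le \prod_{i<j}|Q(\varphi_n(T_{ij}(a_{ij})))|$. By part~\ref{thm_item3} of Theorem~\ref{estimation_of_automaton_size} the $(i,j)$-th factor contributes $1+|a_{ij}|$ when $a_{ij}\ne 0$, and when $a_{ij}=0$ it contributes $1 = 1+|a_{ij}|$ trivially, since $T_{ij}(0)=I$ and $\varphi_n(I)=e$. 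Therefore $|Q(\varphi_n(A))| \le \prod_{i<j}(1+|a_{ij}|) = \prod_{i\ne j}(1+|a_{ij}|)$, the last equality holding because $a_{ij}=0$ whenever $i>j$.

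The only genuinely delicate point is the first step, namely producing an explicit elementary factorization with no superfluous factors and in particular verifying the vanishing of the cross terms for the chosen ordering of blocks; everything after that is the routine submultiplicativity estimate. It is also worth keeping in mind that the bound as stated applies to the unitriangular reading of ``triangular'': a diagonal entry equal to $-1$ would require additional factors $E_i$, each of which multiplies the estimate by $8$ according to part~\ref{thm_item2} of Theorem~\ref{estimation_of_automaton_size}.
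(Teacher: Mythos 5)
Your proof is correct and follows the paper's own route: the paper obtains the corollary immediately from item~\ref{thm_item3} of Theorem~\ref{estimation_of_automaton_size}, the inclusion $Q(gh)\subseteq Q(g)Q(h)$, and a factorization of a (uni)triangular matrix into the transvections $T_{ij}(a_{ij})$, one per off-diagonal entry; you have simply written out that factorization and the vanishing of the cross terms explicitly. Your closing caveat is also well taken: read literally the corollary admits diagonal entries $-1$ (e.g.\ $A=-I$ for even $n$), where the claimed bound $\prod_{i\ne j}(1+|a_{ij}|)=1$ cannot hold because $\varphi_n(-I)\ne e$ has trivial rooted permutation and hence cannot be a one-state automorphism, so the statement should indeed be understood for unitriangular $A$.
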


In general, to obtain such an estimation a factorization is required. Moreover, such a factorization strongly depends on an algorithm applied. 
For instance, it is shown in~\cite{MR0737253} that for $n\ge 3$ each matrix $A \in SL(n,\mathbb{Z})$ is a product of at most $(3n^2 - n)/2 + 36$ elementary matrices.
However, elementary multipliers of the form $T_{ij}(k)$ may contain enormous $k$.

\section{Finite state representation of a free group}
\label{section_Automaton_representation_free_group}

Let $n=2$. We will show how isomorphic embedding $\varphi_2$ of $GL(2,\mathbb{Z})$ in $FAut \mathcal{T}_{4}$ give rise to an isomorphic embedding of the free group of rank 2 in $FAut \mathcal{T}_{2}$. Let $\mathsf{X}=\{0,1\}$ be the set of vertices of the first level of $\mathcal{T}_{2}$.  Consider finite state automorphisms $a,d \in FAut \mathcal{T}_{2}$ defined by their Moore diagrams,  see Figure~\ref{fig:free_group_generator1} and Figure~\ref{fig:free_group_generator2} correspondingly.

\begin{figure}[ht]
\centering
{\begin{tikzpicture}[>=stealth,scale=0.5,shorten >=2pt, thick,every initial by arrow/.style={*->}]
  \node[state, fill=gray!50] (a) at (0, 0)   {$a$};  
  \node[state] (a0) at (0, 5)   {$a_0$};
  \node[state] (a1) at (0, -5)   {$a_1$};
  \node[state] (b) at (9, 0)   {$b$};
  \node[state] (b0) at (9, 5)   {$b_0$};
  \node[state] (b1) at (9, -5)   {$b_1$};
  \node[state] (c) at (18, 0)   {$c$};
  \node[state] (c0) at (18, 5)   {$c_0$};
  \node[state] (c1) at (18, -5)   {$c_1$};
    \path[->,scale=2]
    (a) edge  [bend left] node [left]{$0|0$} (a0)
    (a0) edge  [bend left] node [right]{$0|0$} (a)
    (a) edge  [bend right] node [left]{$1|1$} (a1)
    (a1) edge  [bend right] node [right]{$1|1$} (a)
    (b) edge  [bend left] node [left]{$0|1$} (b0)
    (b0) edge  [bend left] node [right]{$0|1$} (b)
    (b) edge  [bend right] node [left]{$1|0$} (b1)
    (b1) edge  [bend right] node [right]{$0|1$} (b)
    (c) edge  [bend left] node [left]{$0|0$} (c0)
    (c0) edge  [bend left] node [right]{$1|1$} (c)
    (c) edge  [bend right] node [left]{$1|1$} (c1)
    (c1) edge  [bend right] node [right]{$0|0$} (c)
    (a0) edge   node [above]{$1|1$} (b)
    (a1) edge   node [above, pos=0.7]{$0|0$} (b)
    (b0) edge   node [below, pos=0.7]{$1|0$} (c)
    (b1) edge   node [above, pos=0.7]{$1|0$} (a)
    (c0) edge   node [below, pos=0.7]{$1|0$} (b)
    (c1) edge   node [below]{$1|1$} (b);    
\end{tikzpicture}}
\caption{Moore diagram $\mathcal{A}_1$ that defines generator $a$ of the free group}
\label{fig:free_group_generator1}
\end{figure}
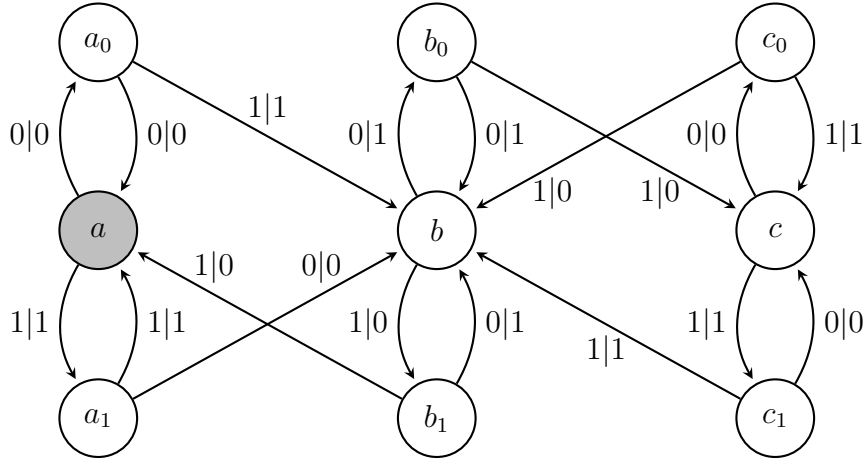

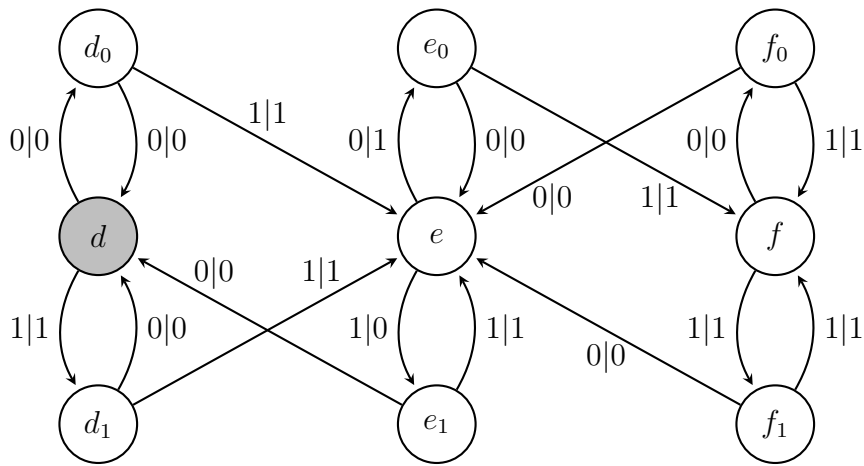
\begin{figure}[ht]
\centering
{\begin{tikzpicture}[>=stealth,scale=0.5,shorten >=2pt, thick,every initial by arrow/.style={*->}]
  \node[state, fill=gray!50] (d) at (0, 0)   {$d$};  
  \node[state] (d0) at (0, 5)   {$d_0$};
  \node[state] (d1) at (0, -5)   {$d_1$};
  \node[state] (e) at (9, 0)   {$e$};
  \node[state] (e0) at (9, 5)   {$e_0$};
  \node[state] (e1) at (9, -5)   {$e_1$};
  \node[state] (f) at (18, 0)   {$f$};
  \node[state] (f0) at (18, 5)   {$f_0$};
  \node[state] (f1) at (18, -5)   {$f_1$};
    \path[->,scale=2]
    (d) edge  [bend left] node [left]{$0|0$} (d0)
    (d0) edge  [bend left] node [right]{$0|0$} (d)
    (d) edge  [bend right] node [left]{$1|1$} (d1)
    (d1) edge  [bend right] node [right]{$0|0$} (d)
    (e) edge  [bend left] node [left]{$0|1$} (e0)
    (e0) edge  [bend left] node [right]{$0|0$} (e)
    (e) edge  [bend right] node [left]{$1|0$} (e1)
    (e1) edge  [bend right] node [right]{$1|1$} (e)
    (f) edge  [bend left] node [left]{$0|0$} (f0)
    (f0) edge  [bend left] node [right]{$1|1$} (f)
    (f) edge  [bend right] node [left]{$1|1$} (f1)
    (f1) edge  [bend right] node [right]{$1|1$} (f)
    (d0) edge   node [above]{$1|1$} (b)
    (d1) edge   node [above, pos=0.7]{$1|1$} (e)
    (e0) edge   node [below, pos=0.7]{$1|1$} (f)
    (e1) edge   node [above, pos=0.7]{$0|0$} (d)
    (f0) edge   node [below, pos=0.7]{$0|0$} (e)
    (f1) edge   node [below]{$0|0$} (b);    
\end{tikzpicture}}
\caption{Moore diagram $\mathcal{A}_2$ that defines generator $d$ of the free group}
\label{fig:free_group_generator2}
\end{figure}

\begin{theorem}
    \label{theorem_free_group}
    The group, generated by finite state automorphisms $a$ and $d$, if free of rank 2.
\end{theorem}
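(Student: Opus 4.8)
The plan is to exhibit $\langle a,d\rangle$ as the isomorphic image of a classical free subgroup of $GL(2,\mathbb{Z})$ under an alphabet--reduction procedure. By Sanov's theorem, the subgroup of $SL(2,\mathbb{Z})$ generated by the elementary matrices $T_{21}(2)$ and $T_{12}(2)$ is free of rank $2$. Since $\varphi_2$ is an isomorphic embedding and, by the identities used in the proof of Theorem~\ref{estimation_of_automaton_size}, $\varphi_2(T_{21}(2))=t_1^2$ and $\varphi_2(T_{12}(2))=s_{12}t_1^2 s_{12}$, the subgroup $H=\langle t_1^2,\ s_{12}t_1^2 s_{12}\rangle\le FAut\mathcal{T}_4$ is free of rank $2$. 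Moreover, by Lemma~\ref{t1_and_t2_commute} and Lemma~\ref{states_of_powers_of_t1_and_t2} (with $n=2$) the generators have finite state sets, with $Q(t_1^2)=\{t_1^2,t_1t_2,t_2^2\}$, which will control the size of the automata obtained below.

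Next I would invoke the alphabet--reduction method of~\cite{ADM2023}. One identifies the vertex set of $\mathcal{T}_4$, i.e. the words over $\mathbb{Z}_2^2$, with the words over $\{0,1\}$ of even length, by fixing a suitable bijection $\beta\colon\mathbb{Z}_2^2\to\{0,1\}^2$ and reading each letter of $\mathbb{Z}_2^2$ as a block of two binary letters; a linear $\beta$ can be chosen so that the ``staircase'' permutations $\tau$, $\tau\sigma$ and $\pi_{12}$ unfold into genuine $\mathcal{T}_2$--actions on such blocks. The method of~\cite{ADM2023} then yields a homomorphism $\psi$ from the self--similar closure $G$ of $\{t_1^2,s_{12}t_1^2 s_{12}\}$ into $FAut\mathcal{T}_2$ with the following features: $\psi(g)$ acts on the even levels of $\mathcal{T}_2$ exactly as $g$ acts on $\mathcal{T}_4$ via $\beta$; hence $\psi$ is a section of the level--doubling restriction $FAut\mathcal{T}_2\to FAut\mathcal{T}_4$, so $\psi$ is injective; and $\psi$ sends finite state automorphisms to finite state automorphisms. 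Restricting $\psi$ to $H\le G$ gives an isomorphic embedding of the free group $H$ into $FAut\mathcal{T}_2$.

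Then I would carry out a finite verification that, for the chosen $\beta$, $\psi(t_1^2)=a$ and $\psi(s_{12}t_1^2 s_{12})=d$, the automorphisms defined by the Moore diagrams $\mathcal{A}_1$ and $\mathcal{A}_2$. Concretely: compute the wreath recursions of $t_1^2$ and of $s_{12}t_1^2 s_{12}$ on $\mathcal{T}_4$ over the four letters of $\mathbb{Z}_2^2$ (using Lemma~\ref{t1_and_t2_commute}), encode them through $\beta$, and match the result against $\mathcal{A}_1$ and $\mathcal{A}_2$; the nine states of each diagram fall into three ``block--level'' states, corresponding to the three elements of $Q(t_1^2)$ (resp. its $s_{12}$--conjugate), each carrying two ``intermediate'' states read off from the two binary digits of a block. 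Once this identification is in place, $\langle a,d\rangle=\psi(H)$; since $\psi$ is injective and $H$ is free of rank $2$, the group $\langle a,d\rangle$ is free of rank $2$, which finishes the proof.

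I expect the main obstacle to be the middle step: checking that the reduction of~\cite{ADM2023} actually applies to $G$, i.e. that the bijection $\beta$ can be chosen so that the induced maps on length--two binary blocks are well defined tree automorphisms, are finite state, and --- crucially --- that $g\mapsto\psi(g)$ respects products. Sanov's freeness is classical, and the identification of $\psi(t_1^2)$ and $\psi(s_{12}t_1^2 s_{12})$ with $a$ and $d$ is a bounded, mechanical computation; the delicate point is setting up the alphabet reduction so that it is simultaneously a homomorphism, injective, and finite--state preserving.
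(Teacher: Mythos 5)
Your proposal is correct and follows essentially the same route as the paper: Sanov's theorem gives freeness of $\langle t_1^2, s_1^2\rangle$ with $s_1=s_{12}t_1 s_{12}$, and the paper carries out the alphabet reduction of~\cite{ADM2023} concretely via the block encoding $1\mapsto 00$, $2\mapsto 11$, $3\mapsto 10$, $4\mapsto 01$ of $\mathbb{Z}_2^2$ into binary pairs, checking directly that $a$ and $d$ (with the remaining states of $\mathcal{A}_1$, $\mathcal{A}_2$) intertwine with $t_1^2$ and $s_1^2$ under the induced injection of vertex sets. The ``delicate point'' you flag is resolved in the paper exactly by this explicit verification on generators and states rather than by appeal to a general theorem.
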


\begin{proof}
Let $\mathsf{X}=\{1,2,3,4\}$.
To simplify notation we use the following bijection from $\mathbb{Z}_2^2$ $\mathsf{X}$:
\[
(0,0) \mapsto 1, (1,0) \mapsto 2, (0,1) \mapsto 3,  (1,1) \mapsto 11.
\]
Then isomorphism $\varphi_2$ maps matrices
\[
\left(
\begin{matrix}
    1 & 0 \\
    1 & 1
\end{matrix}
\right), 
\left(
\begin{matrix}
    1 & 1 \\
    0 & 1
\end{matrix}
\right) \in GL(2,\mathbb{Z})
\]
to finite state automorphisms $t_1,s_1 \in FAut \mathcal{T}_{4}$ such that
\[
t_1 = (t_1,t_1,t_1,t_2)(34), \quad t_2 = (t_2,t_1,t_2,t_2)(12),
\]
\[
s_1 = (s_1,s_1,s_1,s_2)(24),  \quad
s_2 = (s_2,s_2,s_1,s_2)(13).
\]
Here $s_1=s_{12}\cdot t_1 \cdot s_{12}$.
Since matrices
\[
\left(
\begin{matrix}
    1 & 0 \\
    2 & 1
\end{matrix}
\right), 
\left(
\begin{matrix}
    1 & 2 \\
    0 & 1
\end{matrix}
\right)
\]
generate a free subgroup of rank 2 in  $GL(2,\mathbb{Z})$ (see \cite{MR0022557}) then their images under $\varphi_2$, finite state automorphisms 
$t_1^2$ and $s_1^2$, generate a free subgroup of rank 2 in $FAut \mathcal{T}_{4}$.
Direct computations show that
\[
t_1^2 = (t_1^2,t_1^2,t_1t_2,t_2t_1), \quad t_2^2 = (t_2t_1,t_1t_2,t_2^2,t_2^2),
\]
\[
t_1t_2=t_2t_1=(t_1t_2,t_1^2,t_1t_2,t_2^2)(12)(34),
\]
and
\[
s_1^2 = (s_1^2,s_1s_2,s_1^2,s_2s_1), \quad s_2^2 = (s_2s_1,s_2^2,s_1s_2,s_2^2),
\]
\[
s_1s_2=s_2s_1=(s_1s_2,s_1s_2,s_1^2,s_2^2)(13)(24).
\]

Consider the bijection between the set of vertices of the first level of $\mathcal{T}_{4}$ and the set of vertices of the second level of $\mathcal{T}_{2}$
defined by the rule
\[
1 \mapsto 00, 2 \mapsto 11, 3 \mapsto 10,  4 \mapsto 01.
\]
It defines an injection $f$ from the vertex set of $\mathcal{T}_{4}$ to $\mathcal{T}_{2}$. 
Then one directly verifies that for arbitrary vertex $v$ of $\mathcal{T}_{4}$
the following equalities holds
\[
f(v^{t_1^2})=(f(v))^{a}, \quad f(v^{t_1t_2})=(f(v))^{b}, \quad 
f(v^{t_2^2})=(f(v))^{c},
\]
\[
f(v^{s_1^2})=(f(v))^{d}, \quad f(v^{s_1s_2})=(f(v))^{d}, \quad 
f(v^{s_2^2})=(f(v))^{f}.
\]
It means that groups generated by the sets $\{t_1,s_1\}$ and $\{a,d\}$ are isomorphic as permutation groups. In particular, the latter group is free of rank 2. The proof is complete.
\end{proof}

\bibliographystyle{amsplain}
\bibliography{references.bib}



\end{document}